
\documentclass[11pt]{amsart}

\usepackage[colorlinks=false]{hyperref}
\usepackage{graphicx}
\usepackage{enumerate}

\theoremstyle{plain}
\newtheorem{theorem}{Theorem}[section]
\newtheorem{lemma}[theorem]{Lemma}
\newtheorem{proposition}[theorem]{Proposition}
\newtheorem{subproposition}{}[theorem]
\newtheorem{corollary}[theorem]{Corollary}

\theoremstyle{definition}
\newtheorem{definition}[theorem]{Definition}

\newcommand{\cl}{\operatorname{cl}}
\newcommand{\dash}{\nobreakdash-\hspace{0pt}}
\newcommand{\ba}{\backslash}

\sloppy

\title{Excluded minors for the class of split matroids}

\author[Cameron]{Amanda Cameron}
\address{School of Mathematical Sciences,
Queen Mary University of London,
London,
United Kingdom}
\email{a.cameron272@gmail.com}

\author[Mayhew]{Dillon Mayhew}
\address{School of Mathematics and Statistics,
Victoria University of Wellington,
New Zealand}
\email{dillon.mayhew@vuw.ac.nz}

\begin{document}

\maketitle
\begin{abstract}
Split matroids form a minor-closed class of matroids, and are defined by placing conditions on the system of split hyperplanes in the matroid base polytope.
They can equivalently be defined in terms of structural properties involving cyclic flats.
We confirm a conjecture of Joswig and Schr\"{o}ter by proving an excluded-minor characterisation of the class of split matroids.
\end{abstract}

\section{Introduction}

The class of split matroids was recently introduced by Joswig and Schr\"{o}ter \cite{joswig}, who successfully deployed them as a tool in tropical linear geometry.  The definition arises from natural considerations in the polyhedral view of matroids. Let $M$ be a matroid on the ground set $\{1,\ldots, n\}$. Any subset of $\{1,\ldots, n\}$ is identified with its characteristic vector in $\mathbb{R}^{n}$. The \emph{matroid base polytope}, $P(M)$, is the convex hull of the characteristic vectors of the bases of $M$. Roughly speaking, a split of a polytope is a division into two polytopes by a hyperplane, called a split hyperplane. If all pairs of split hyperplanes in a matroid polytope satisfy a certain compatibility condition, then the matroid is split. Although the motivation for split matroids arises from tropical linear geometry, natural questions also arise in the area of structural matroid theory, and it is one of these questions that we address here.

First we provide more detail on the polyhedral background.
Let $X$ be a set of points in $\mathbb{R}^{n}$.
The \emph{convex hull} of $X$ is the intersection of all closed half-spaces that contain $X$.
A \emph{polytope} is the convex hull of a finite set of points.
The intersection of two polytopes is also a polytope.
If $X$ is empty, then so is its convex hull.
Let $P$ be the convex hull of the non-empty finite set $X$.
Let $A$ be the affine subspace of $\mathbb{R}^{n}$ spanned by $P$, and let $H$ be any hyperplane of $A$.
Thus $A-H$ is partitioned into two open half-spaces of $A$.
If one of these has an empty intersection with $P$, then $H\cap P$ is a \emph{face} of $P$.
Note that the empty set is a face.
In addition, we declare $P$ itself to be a face of $P$.
A face is a \emph{facet} if it is properly contained in exactly one face, namely $P$.
A \emph{vertex} is a minimal non-empty face.
A point in $P$ that is in no face other than $P$ itself is in the \emph{relative interior} of $P$.
Every vertex of $P$ is a point in $X$.
Every face of $P$ is the convex hull of the vertices it contains, and is therefore a polytope.

The notion of a polytope split originated in \cite{BD92} (see \cite[Section 5.3.3]{LRS10}).
The definition we use here is from \cite{herrmann}. We let $P$ be a polytope. A \emph{split} of $P$ is a collection, $\mathcal{C}$, of polytopes  such that:
\begin{enumerate}[(i)]
    \item the empty polytope is in $\mathcal{C}$,
    \item if $Q$ is in $\mathcal{C}$, then all the vertices of $Q$ are also vertices of $P$,
    \item if $Q$ is in $\mathcal{C}$, so are all the faces of $Q$, 
    \item the intersection of any two distinct polytopes $Q_1,Q_2\in\mathcal{C}$ is a face of both $Q_1$ and $Q_2$,
    \item $\bigcup_{C\in \mathcal{C}} C=P$, and
    \item there are exactly two maximal polytopes in $\mathcal{C}$.
\end{enumerate}
The members of $\mathcal{C}$ are called the \emph{cells} of the split. The affine subspace spanned by the intersection of the two maximal cells is called a \emph{split hyperplane}.

Let $\Delta(r,n)$ be the $(n-1)$\dash dimensional \emph{hypersimplex}: that is, the convex hull of those $0,1$\dash vectors in $\mathbb{R}^{n}$ with exactly $r$ ones.
Hence $\Delta(r,n)$ is the base polytope of the uniform matroid $U_{r,n}$.
Note that the polytope of any rank-$r$ matroid on $n$ elements is contained in $\Delta(r,n)$.
Let $M$ be a rank-$r$ matroid with ground set $\{1,\ldots, n\}$.
If $x$ is in $\mathbb{R}^{n}$, then $x_{i}$ stands for the entry of $x$ indexed by $i\in E(M)$.
Edmonds \cite{edmonds} proved that
\[P(M)=\left\{x\in\Delta(r,n)\ \colon \sum_{i\in F}x_i\leq r(F) \ \text{for all flats}\ F\ \text{of}\ M\right\}.\]
Let $F$ be a flat of $M$.
Then $H(F)$ is the set
\[\left\{x\in\mathbb{R}^{n}\ \colon \sum_{i\in F}x_{i}=r(F)\right\}.\]
If $F$ is minimal under inclusion with respect to $H(F)$ intersecting $P(M)$ in a facet of $P(M)$, then we say that $F$ is a \emph{flacet} of $M$.
(This definition originated in \cite{FS05}.)
If, in addition, $H(F)\cap \Delta(r,n)$ spans a split hyperplane of $\Delta(r,n)$, then we say that $F$ is a \emph{split flacet} of $M$.
In this case, we can think of $H(F)$ as separating $P(M)$ from a portion of $\Delta(r,n)$ that does not intersect $P(M)$.
Roughly speaking, the split flacets are the hyperplanes we use when carving off portions of $\Delta(r,n)$ to obtain $P(M)$.

\begin{definition}[\cite{joswig}]
\label{def2}
Assume that $M$ is a rank-$r$ matroid with ground set $\{1,\ldots, n\}$. Let $A$ be the affine subspace of $\mathbb{R}^{n}$ spanned by $P(M)$. We use $[0,1]^{n}$ to denote the closed unit cube. Assume that the following holds for any distinct split flacets, $F_{1}$ and $F_{2}$, of $M$: no point in $H(F_{1})\cap H(F_{2})$ is in the relative interior of $A\cap [0,1]^{n}$. Then we say that $M$ is a \emph{split matroid}.
\end{definition}

Joswig and Schr\"{o}ter  observe that the matroid polytopes of split matroids are exactly those polytopes whose faces of codimension at least two are contained in the boundary of $\Delta(r,n)$. They use split matroids and the Dressian to construct a number of nonrepresentable tropical linear spaces, and give a characterisation of matroid representability in terms of these spaces. In addition, they prove that the class of split matroids contains the (possibly dominating) class of sparse paving matroids.

The following result is \cite[Proposition 44]{joswig}.

\begin{proposition}
\label{minorclosed}
The class of split matroids is closed under duality and under taking minors.
\end{proposition}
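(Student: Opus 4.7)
The plan is to derive both properties from Definition~\ref{def2} by tracking split flacets under the natural affine maps relating $P(M)$, $P(M^*)$, and $P(M\ba e)$.

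For duality, the workhorse is the affine involution $\phi\colon\mathbb{R}^n\to\mathbb{R}^n$, $\phi(x)=\mathbf{1}-x$. A direct computation gives $\phi(\Delta(r,n))=\Delta(n-r,n)$, $\phi(P(M))=P(M^*)$, and $\phi(H(F))=H(E\setminus F)$ (taken in $M^*$) for every flat $F$ of $M$. Combining this with Proposition~\ref{flacet} and the standard identities $(M^*)|(E\setminus F)=(M/F)^*$ and $(M^*)/(E\setminus F)=(M|F)^*$ (together with the self-duality of connectedness) shows that $F\mapsto E\setminus F$ is a bijection between the flacets of $M$ and of $M^*$. Because $\phi$ swaps the facets $\{x_i=0\}$ and $\{x_i=1\}$ of $[0,1]^n$, it carries split hyperplanes of $\Delta(r,n)$ to split hyperplanes of $\Delta(n-r,n)$, so the bijection restricts to split flacets. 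Since $\phi$ is an affine automorphism of $[0,1]^n$ preserving relative interiors, the forbidden-intersection condition of Definition~\ref{def2} transfers verbatim.

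For minor closure, duality reduces us to deletion. Assume $M$ is split and $e\in E(M)$ is neither a loop nor a coloop (those cases are immediate), so $P(M\ba e)=P(M)\cap\{x_e=0\}$ naturally embeds in $\Delta(r,n-1)$. Arguing contrapositively, suppose distinct split flacets $F_1',F_2'$ of $M\ba e$ violate Definition~\ref{def2}, witnessed by a point $y\in H(F_1')\cap H(F_2')\cap\mathrm{relint}(A'\cap[0,1]^{n-1})$. Taking $F_i=\cl_M(F_i')\in\{F_i',F_i'\cup\{e\}\}$, the identities $(M\ba e)|F_i'=M|F_i'$ and $(M\ba e)/F_i'=(M/F_i')\ba e$ combined with Proposition~\ref{flacet} yield that each $F_i$ is a flacet of $M$, while a rank computation identifies $H(F_i)\cap\{x_e=0\}$ with the embedded copy of $H(F_i')$. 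Lifting $y$ to $(y,0)$ and perturbing slightly---raising $x_e$ off $0$ along a direction that preserves $A$, $H(F_1)$, and $H(F_2)$---yields a point in $\mathrm{relint}(A\cap[0,1]^n)\cap H(F_1)\cap H(F_2)$, contradicting that $M$ is split.

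The main obstacle is twofold: first, showing that the lifted flats $F_i$ are not merely flacets but \emph{split} flacets, i.e., that $H(F_i)\cap\Delta(r,n)$ spans a split hyperplane, which amounts to extending a split of $\Delta(r,n-1)$ along the $e$-coordinate to a split of the full hypersimplex; and second, producing the perturbation in the final step, a small linear-algebraic exercise that requires a case analysis based on whether $e$ lies in both, one, or neither of $F_1,F_2$. Both steps are routine once set up correctly but form the technical core of the deletion argument.
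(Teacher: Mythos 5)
First, a point of comparison: the paper does not prove this proposition at all --- it is imported verbatim as Proposition 44 of \cite{joswig} --- so your attempt must stand on its own. The duality half is right in spirit (complementation plus the involution $x\mapsto\mathbf{1}-x$ is indeed the standard mechanism), but your intermediate claim that $F\mapsto E\setminus F$ is a bijection between the flacets of $M$ and of $M^{*}$ is false. For example, a single non-loop element $f$ of $U_{2,4}$ is a flacet ($\{f\}$ is a proper flat with $M|\{f\}$ and $M/\{f\}$ connected, so Proposition~\ref{flacet} applies), yet $E\setminus\{f\}$ is spanning in $U_{2,4}^{*}\cong U_{2,4}$ and hence not even a flat. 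The bijection is correct once restricted to \emph{split} flacets, because those are cyclic flats of size at least two (this is exactly what the proof of Lemma~\ref{certificates} extracts from Propositions 4 and 13 of \cite{joswig}) and cyclic flats complement perfectly; but that repair needs the cyclic-flat characterisation, not Proposition~\ref{flacet} alone.

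The deletion half has a genuine gap, and it sits precisely in the two steps you defer as ``routine once set up correctly.'' (i) Proposition~\ref{flacet} applies only to \emph{connected} matroids, and deleting an element from a connected matroid can disconnect it; you invoke the proposition both for $M\ba e$ (to unpack what its flacets look like) and for $M$ (to recognise the lifted sets as flacets) without addressing the disconnected case, in which the affine span $A$ drops dimension and the description of flacets changes. (ii) The perturbation need not exist: a direction $v$ with $v_e>0$ preserving $A$, $H(F_1)$ and $H(F_2)$ exists if and only if $\chi_{\{e\}}\notin\operatorname{span}\{\mathbf{1},\chi_{F_1},\chi_{F_2}\}$, and this containment can hold --- for instance when $F_1$ and $F_2$ are disjoint with $F_1\cup F_2=E-e$. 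In that configuration the existence of your witness $y$ forces $r(F_1)+r(F_2)=r(M\ba e)=r(F_1\cup F_2)$, i.e.\ $(F_1,F_2)$ is a separation of $M\ba e$; so the exceptional case of (ii) is exactly the unhandled case of (i), and the argument cannot close without confronting both together (and without checking the remaining degenerate spanning configurations of $\{\mathbf{1},\chi_{F_1},\chi_{F_2},\chi_{\{e\}}\}$). You also never verify that the lifted flacets satisfy the split-hyperplane condition in $\Delta(r,n)$. A cleaner route, and the one consistent with how this paper operates, is to prove both closure properties from the structural characterisation (Theorem 11 of \cite{joswig}, i.e.\ Lemma~\ref{certificates} here, together with Proposition~\ref{disconnected} for disconnected matroids): certificates for non-splitting are phrased via proper cyclic flats with connected restriction and contraction, and these data transform transparently under complementation, deletion, and contraction.
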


In light of Proposition \ref{minorclosed}, we naturally ask what the excluded minors are for the class of split matroids.
Joswig and Schr\"{o}ter identify five excluded minors.
Our main theorem shows that their list of excluded minors is complete.
Figure~\ref{exclminors} shows geometric representations of four connected rank-$3$ matroids, each with six elements.
Note that $S_{1}^{*}\cong S_{2}$, whereas $S_{3}$ and $S_{4}$ are both self-dual matroids.
In addition, we define $S_0$ to be the matroid constructed from the direct sum $U_{2,3}\oplus U_{2,3}$ by adding one parallel point to each of the two connected components.
Thus $S_{0}$ is the direct sum of two copies of $M(\mathcal{W}_{2})$, where $\mathcal{W}_{2}$ is the graph obtained by adding a parallel edge to a triangle.

\begin{figure}[htb]
\centering
\includegraphics{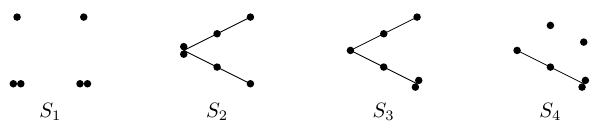}
\caption{Connected excluded minors for split matroids.}
\label{exclminors}
\end{figure}

\begin{theorem}
\label{maintheorem}
The excluded minors for the class of split matroids are $S_0$, $S_1$, $S_2$, $S_3$, and $S_4$.
\end{theorem}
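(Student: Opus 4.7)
The plan is the standard three-step excluded-minor pattern: reformulate Definition \ref{def2} as a combinatorial condition on pairs of flacets, check directly that each $S_{i}$ is an excluded minor under that condition, and then argue that no other matroid can be.

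\textbf{Step 1 (combinatorial characterisation).} I would turn the polytopal condition into a rank/incidence condition on flats. A point of $H(F_{1})\cap H(F_{2})$ lies in the relative interior of $A\cap [0,1]^{n}$ if and only if the linear system $\sum_{i\in F_{j}}x_{i}=r(F_{j})$ for $j=1,2$, together with the rank equation defining $P(M)$, admits a solution with $0<x_{i}<1$. Partitioning $E(M)$ into the four blocks $F_{1}\cap F_{2}$, $F_{1}\setminus F_{2}$, $F_{2}\setminus F_{1}$, and $E\setminus(F_{1}\cup F_{2})$, this is a small feasibility question that should boil down to all four blocks being nonempty together with a mild rank inequality. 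Combined with Proposition~\ref{flacet}, this gives a clean criterion: $M$ is split if and only if every pair of flacets is \emph{compatible}, meaning at least one of the four blocks is empty (with additional care needed in the disconnected case, where the lower-dimensional relative interior lets extra pairs witness non-splitness across components, and that is precisely where $S_{0}$ enters).

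\textbf{Step 2 (the five matroids are excluded minors).} For each $S_{i}$, I would exhibit an incompatible pair of flacets directly from its geometric picture (or from the direct-sum structure, for $S_{0}$), and then verify that every single-element deletion and contraction satisfies the compatibility test. The isomorphism $S_{1}^{*}\cong S_{2}$, the self-duality of $S_{0}$, $S_{3}$, and $S_{4}$, and the known duality-closedness of the class cut the verification roughly in half.

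\textbf{Step 3 (completeness).} Let $M$ be an excluded minor. If $M$ is disconnected, minor-minimality combined with the Step~1 criterion should force each component to be $M(\mathcal{W}_{2})$, yielding $M=S_{0}$. If $M$ is connected, fix flacets $F_{1},F_{2}$ whose block partition has all four parts nonempty; minor-minimality then forces every single-element deletion and contraction to restore compatibility, either by emptying a block or by destroying the flacet property of $F_{1}$ or $F_{2}$. Using Proposition~\ref{flacet} to track how $M|F_{i}$ and $M/F_{i}$ behave under these minor operations, I would bound $|E(M)|$ and $r(M)$, leaving finitely many candidates to match against $S_{1},\ldots,S_{4}$ by inspection.

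The principal obstacle is the connected case of Step~3: extracting the bounds on $|E(M)|$ and $r(M)$ demands a careful case analysis stratified by the sizes and ranks of the four blocks and by the simultaneous connectivity constraints on $M|F_{1}$, $M/F_{1}$, $M|F_{2}$, and $M/F_{2}$. Once those bounds are in place, the remainder is a finite, essentially mechanical verification.
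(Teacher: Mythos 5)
There is a genuine gap, concentrated in Step 3. Your Steps 1 and 2 are plausible in outline (though your guessed compatibility criterion --- ``at least one of the four blocks is empty'' plus a mild rank inequality --- is unverified, and the paper sidesteps pairwise conditions entirely: it imports Joswig and Schr\"oter's Theorem 11, which reduces splitness of a connected matroid to a condition on \emph{single} flats, namely that $M$ has no proper cyclic flat $Z$ with $M|Z$ and $M/Z$ connected and at least one of them non-uniform). But Step 3 in the connected case contains no actual argument. Saying that minor-minimality ``forces every single-element deletion and contraction to restore compatibility'' and that you ``would bound $|E(M)|$ and $r(M)$'' is a statement of intent, not a method: nothing in the compatibility formalism gives such bounds for free, and this is precisely where all the work of the paper lies. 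The paper's route is: (i) reduce, via duality and the cyclic-flat reformulation, to the case where $M$ has a proper cyclic flat $Z$ with $M|Z$ connected and non-uniform; (ii) invoke a nontrivial external result (Gershkoff--Oxley) that every connected non-uniform matroid has an $M(\mathcal{W}_2)$-minor, so $M|Z$ has one; (iii) prove by a minimal-counterexample argument (Lemma \ref{S2S3S4}) that any connected matroid with such a flat has an $S_2$-, $S_3$-, or $S_4$-minor --- the crux being that in a minimal counterexample the complement of $Z$ is forced to be a series pair, which collapses the dual to rank $3$ and permits a finite geometric case analysis. Your proposal has no counterpart to (ii) or (iii), and without them there is no bound on the size of a connected excluded minor.

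The same omission affects your disconnected case: to force each component of a disconnected excluded minor to contain $M(\mathcal{W}_2)$ you need exactly the Gershkoff--Oxley fact just mentioned, together with the observation that such an excluded minor has precisely two components, both non-uniform and both split; ``minor-minimality combined with the Step 1 criterion'' does not by itself produce this.
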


Any unexplained matroid terms can be found in \cite{oxley}.

\section{Reducing to the connected case}

To prove Theorem \ref{maintheorem}, we employ Joswig and Schr\"{o}ter's equivalent formulation of Definition \ref{def2} that relies entirely on structural concepts.

We say that a flat, $Z$, of the matroid, $M$, is \emph{proper} if $0<r(Z)<r(M)$.
A set $X\subseteq E(M)$ is \emph{cyclic} if the restriction $M|X$ contains no coloop.
The next result is \cite[Proposition 1]{joswig}.

\begin{proposition}
\label{flacet}
Let $Z$ be a flat of the connected matroid $M$. Then $Z$ is a flacet if and only if it is proper, and both $M|Z$ and $M/Z$ are connected.
\end{proposition}

\begin{proposition}
\label{splitflacets}
Let $Z$ be a flat of the connected matroid $M$. Then $Z$ is a split flacet if and only if it is proper and cyclic, and both $M|Z$ and $M/Z$ are connected.
\end{proposition}

\begin{proof}
Let $E=\{1,\ldots, n\}$ be the ground set of $M$, and let $r$ be the rank of $M$.
Assume $Z$ is a proper cyclic flat of $M$ and that $M|Z$ and $M/Z$ are connected.
Then $Z$ is a flacet by Proposition \ref{flacet}.
We know that $0< r(Z) < |Z|$, since $Z$ is a proper flat and is not independent.
As $Z$ and $E-Z$ are non-empty, we can find an element in $E-Z$ that is not a coloop (since $M$ is connected).
These are the conditions required to apply Lemma 6 of \cite{joswig}.
From this lemma, we see that the equation
\[
(r-r(Z))\sum_{i\in Z}x_{i} = r(Z)\sum_{i\notin Z}x_{i},\ \text{or equivalently},\ 
r\sum_{i\in Z}x_{i} = r(Z)\sum_{i\in E}x_{i}.
\]
defines a split hyperplane of $\Delta(r,n)$.
The intersection of $H(Z)$ with $\Delta(r,n)$ satisfies
the equation $\sum_{i\in Z}x_{i}=r(Z)$.
By multiplying both sides of this equation by $r$, and using the fact that in $\Delta(r,n)$ we have the
equality $\sum_{i\in E}x_{i}=r$, we again obtain
\[
r\sum_{i\in Z}x_{i} = r(Z)\sum_{i\in E}x_{i}.
\]
This shows that the intersection $H(Z)\cap \Delta(r,n)$ is a split of $\Delta(r,n)$, so $Z$ is a split
flacet, as desired.

For the converse, we let $Z$ be a split flacet.
Then $Z$ is a proper flat and both $M|Z$ and $M/Z$ are connected by Proposition \ref{flacet}.
We need only show $Z$ is cyclic.
Since $H(Z)\cap \Delta(r,n)$ is a split of $\Delta(r,n)$, \cite[Proposition 4]{joswig} asserts there is a positive integer, $\mu$, which satisfies $r > \mu > r-|Z|$.
This implies $|Z| > 1$.
Proposition 13 in \cite{joswig} says that any flacet of $M$ with at least two elements is a cyclic flat.
Therefore $Z$ is cyclic and the proof is complete.
\end{proof}

\begin{definition}
\label{connected}
Let $M$ be a connected matroid, and let $Z$ be a proper cyclic flat of $M$. If both $M|Z$ and $M/Z$ are connected matroids, but at least one of them is a non-uniform matroid, we say that $Z$ is a \emph{certificate for non-splitting}.
\end{definition}

\begin{lemma}
\label{certificates}
Let $M$ be a connected matroid.
Then $M$ is split if and only if it has no certificate for non-splitting.
\end{lemma}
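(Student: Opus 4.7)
The plan is to translate the polyhedral condition of Definition~\ref{def2} into the combinatorial condition of Definition~\ref{connected} via a structural characterisation of split flacets. The intermediate claim I would use (extractable from the Joswig--Schr\"oter analysis of splits of $\Delta(r,n)$) is: for a connected matroid $M$, a proper flat $F$ is a split flacet if and only if $F$ is a cyclic flat and both $M|F$ and $M/F$ are connected \emph{and uniform}. Combined with Proposition~\ref{flacet}, this identifies the split flacets of $M$ with its ``uniform cyclic flacets'', so certificates are precisely those cyclic flacets that fail to be split flacets. I would establish the characterisation by analysing which hyperplanes $H(F)\cap\Delta(r,n)$ actually span a split of the hypersimplex: the two cells of any hypersimplex split are products of smaller hypersimplices, which forces uniformity of $M|F$ and $M/F$ on the matroid side.

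For the forward implication I argue the contrapositive: if $M$ has a certificate $Z$, then $M$ is not split. Without loss of generality, $M|Z$ is non-uniform and connected, so by Proposition~\ref{flacet} applied inside $M|Z$ it has a proper flacet $Y\subsetneq Z$ with $(M|Z)|Y$ and $(M|Z)/Y$ connected, which (choosing $Y$ minimally) we may take to have $M|Y$ and $(M|Z)/Y$ uniform. One then checks that $Y$ is a cyclic flat of $M$ (closed because $Z$ is closed and $Y$ is closed in $M|Z$; cyclic because it is a union of circuits of $M|Z\subseteq M$) and verifies that $M/Y$ is connected using the connectivity of $(M/Y)|(Z\setminus Y)=(M|Z)/Y$ and $(M/Y)/(Z\setminus Y)=M/Z$ together with the connectivity of $M$. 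So $Y$ and $Z$ are two distinct cyclic flacets, the first a split flacet and the second made into one by choosing it minimally; the nested containment $Y\subsetneq Z$ forces $H(Y)\cap H(Z)$ to meet the relative interior of $A\cap[0,1]^{n}$, witnessed by an explicit convex combination of basis indicator vectors, contradicting Definition~\ref{def2}.

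For the reverse implication I also argue the contrapositive: if $M$ is not split, then two split flacets $F_{1},F_{2}$ have $H(F_{1})\cap H(F_{2})$ meeting the relative interior of $A\cap[0,1]^{n}$. By the split flacet characterisation, $F_{1}$ and $F_{2}$ are cyclic flats with uniform connected minors, and the polyhedral incompatibility forces the pair $(F_{1},F_{2})$ to ``cross''—neither contained in the other, yet with $F_{1}\cap F_{2}\neq\emptyset$ and $F_{1}\cup F_{2}\neq E(M)$. From this configuration I would construct a cyclic flat $Z$ of $M$ (taking $F_{1}\cap F_{2}$, $F_{1}\cup F_{2}$, or the flat-closure of a union of circuits spanning both) for which $M|Z$ or $M/Z$ is connected but \emph{non-uniform}—the non-uniformity coming precisely from the fact that the two hypersimplex splits sit on $F_{1}$ and $F_{2}$ with different parameters—yielding a certificate.

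The principal obstacle is the geometric-to-combinatorial translation in both directions. In the forward direction, the delicate step is exhibiting a point of $H(Y)\cap H(Z)$ in the relative interior of $A\cap[0,1]^{n}$, which requires building the point as a convex combination of indicator vectors of bases of $M$ that simultaneously witness rank equality for $Y$ and for $Z$. In the reverse direction, the delicate step is going from crossing split hyperplanes to a specific non-uniform cyclic minor; this pivots on the precise combinatorial description of when two hypersimplex splits meet inside the interior of $\Delta(r,n)$, which in turn controls how the two cyclic flats $F_{1},F_{2}$ can be fused into a certificate.
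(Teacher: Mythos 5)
Your proposal rests on an intermediate claim that is false: that a proper flat $F$ of a connected matroid is a split flacet if and only if $F$ is a cyclic flat with $M|F$ and $M/F$ connected \emph{and uniform}. Being a split flacet is a property of the hyperplane $H(F)$ relative to the ambient hypersimplex $\Delta(r,n)$, not relative to $P(M)$; the paper's proof shows (via Proposition~\ref{flacet} together with Lemma~6, Proposition~4 and Proposition~13 of \cite{joswig}) that the split flacets of a connected matroid are exactly the proper \emph{cyclic} flats $F$ with $M|F$ and $M/F$ connected --- no uniformity is involved. Uniformity of $M|F$ and $M/F$ is instead the content of the split \emph{condition} itself: Theorem~11 of \cite{joswig}, which the paper invokes as a black box, says that a connected matroid is split precisely when every split flacet has uniform restriction and contraction. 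You have conflated the characterisation of split flacets with the characterisation of split matroids.

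This error makes both directions of your argument break down. In the forward direction, a certificate $Z$ has $M|Z$ or $M/Z$ non-uniform, so under your own characterisation $Z$ is \emph{not} a split flacet; Definition~\ref{def2} only constrains pairs of split flacets, so the pair $(Y,Z)$ cannot witness a violation of it, and the phrase ``made into one by choosing it minimally'' does not repair this, since shrinking $Z$ changes the flat. Moreover, your assertion that nested split flacets $Y\subsetneq Z$ automatically force $H(Y)\cap H(Z)$ to meet the relative interior is unjustified: compatibility of two hypersimplex splits with nested supports depends on the rank parameters, and nested compatible split flacets do occur in split matroids. In the reverse direction, the proposal to fuse two crossing uniform split flacets into a single non-uniform certificate has no mechanism behind it and is not how the equivalence works. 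The correct route --- and the one the paper takes --- is to prove only the flacet identification (cyclic, proper, connected restriction and contraction, with the small-cardinality cases excluded by Proposition~4 of \cite{joswig}) and let Theorem~11 of \cite{joswig} carry the polyhedral content; redoing that polyhedral analysis from scratch, as you propose, would require substantially more work even with a corrected starting point.
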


\begin{proof}
Theorem 11 in \cite{joswig} states that $M$ is split if and only if $M|Z$ and $M/Z$ are both uniform, for every split flacet $Z$.
So the lemma follows immediately from Proposition \ref{splitflacets}.
\end{proof}

The following result combines Lemma 10 and Proposition 15 of \cite{joswig}.

\begin{proposition}
\label{disconnected}
Let $U_{1},\ldots, U_{t}$ be the connected components of the matroid $M$, where $t>1$. Then $M$ is a split matroid if and only if each connected matroid, $M|U_{i}$, is a split matroid, and at most one of these matroids is non-uniform.
\end{proposition}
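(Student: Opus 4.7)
The plan is to apply Definition~\ref{def2} directly to $M=M_1\oplus\cdots\oplus M_t$ (with components $M_j$ on $U_j$, having rank $r_j$ and $n_j=|U_j|$), exploiting the product structure $P(M)=P(M_1)\times\cdots\times P(M_t)$. The affine span $A$ of $P(M)$ is then the intersection of the hyperplanes $\sum_{i\in U_j}x_i=r_j$, so the relative interior of $A\cap[0,1]^{n}$ consists of those $x$ satisfying these equalities together with $0<x_i<1$ for every non-loop, non-coloop coordinate.

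The first step is to identify the (split) flacets of $M$. Because $P(M)$ is a product, a proper flat $F=F_1\cup\cdots\cup F_t$ of $M$ can yield a facet only when a single $F_j$ is non-trivial: if two parts $F_j,F_k$ are both non-empty and proper, then on $A$ the inequality $\sum_{i\in F}x_i\leq\sum_\ell r_\ell(F_\ell)$ is implied by the individual component inequalities $\sum_{i\in F_\ell}x_i\leq r_\ell(F_\ell)$, so $H(F)\cap P(M)$ has codimension at least two. Hence the flacets of $M$ are exactly the flacets of the connected components $M_j$ (to which Proposition~\ref{flacet} applies), and a direct comparison of the split-hyperplane conditions for $\Delta(r,n)$ versus $\Delta(r_j,n_j)$ shows that $F\subseteq U_j$ is a split flacet of $M$ if and only if it is a split flacet of $M_j$.

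The second step is to verify Definition~\ref{def2} in two cases. If distinct split flacets $F_1,F_2$ both lie in a single $U_j$, then coordinates outside $U_j$ can be chosen freely in their own component interiors, and the condition that $H(F_1)\cap H(F_2)$ avoid $\operatorname{relint}(A\cap[0,1]^{n})$ reduces to the corresponding condition inside $M_j$; this case fails precisely when $M_j$ itself is not split. If instead $F_1\subseteq U_j$ and $F_2\subseteq U_k$ with $j\neq k$, then $H(F_1)$ and $H(F_2)$ involve disjoint blocks of coordinates. Because $F_1$ is a split flacet of $M_j$ the hyperplane $H(F_1)$ meets $\operatorname{relint}(A_j\cap[0,1]^{n_j})$, and similarly for $F_2$; combining such points with arbitrary interior points in the other components produces a point of $H(F_1)\cap H(F_2)\cap\operatorname{relint}(A\cap[0,1]^{n})$. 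Hence $M$ cannot be split whenever two distinct components both admit a split flacet.

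The remaining ingredient, which I expect to be the main obstacle, is the equivalence between ``non-uniform'' and ``admits a split flacet'' for a connected matroid. If $M_j=U_{r_j,n_j}$ is uniform then its only flacets are singletons, and the corresponding hyperplane $x_e=1$ is a face rather than a split hyperplane of $\Delta(r_j,n_j)$, so no split flacet exists. For the converse I would, in a connected non-uniform $M_j$, locate a non-trivial cyclic flat (equivalently, a dependent set of size at most $r_j$), refine it using Proposition~\ref{flacet} to a flacet $F$, and verify the strict inequalities $0<r_j(F)<|F|$ and $0<r_j-r_j(F)<n_j-|F|$ required for $H(F)$ to split $\Delta(r_j,n_j)$; these are forced by the connectivity of $M_j|F$ and $M_j/F$ together with non-uniformity. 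Combining the three steps yields the stated equivalence: $M$ is split iff each $M_j$ is split and at most one $M_j$ has a split flacet, i.e., at most one $M_j$ is non-uniform.
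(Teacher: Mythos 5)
The paper offers no proof of this proposition at all: it is imported from Joswig and Schr\"oter (their Lemma 10 and Proposition 15). Your attempt at a self-contained polyhedral proof is therefore a genuinely different route, and most of it is sound. The product decomposition $P(M)=P(M_1)\times\cdots\times P(M_t)$, the observation that a flat meeting two components non-trivially cuts a face of codimension at least two, the factorisation of $\operatorname{relint}(A\cap[0,1]^{n})$ as a product of relative interiors of hypersimplices, and the two-case analysis (split flacets in one component reduce to splitness of that component; split flacets in distinct components always yield a forbidden point) all check out. Even the ``direct comparison'' of split conditions for $\Delta(r,n)$ versus $\Delta(r_j,n_j)$, though asserted rather than carried out, is true: for a flacet $F$ of a connected $M_j$ with at least two elements, $M_j/F$ is connected with at least two elements and so contains a circuit, which makes the lower bound $r_j(F)>r_j-(n_j-|F|)$ automatic in both hypersimplices, and the only condition that can fail in either is $r_j(F)<|F|$.

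The genuine gap is exactly the step you flag as the main obstacle: that a connected non-uniform matroid admits a split flacet. Your justification --- ``refine [a cyclic flat] using Proposition~\ref{flacet} to a flacet $F$'' and then claim the inequalities are ``forced by the connectivity of $M_j|F$ and $M_j/F$ together with non-uniformity'' --- does not work as stated. Proposition~\ref{flacet} is a characterisation, not a refinement procedure: given a proper cyclic flat $Z$ with, say, $M_j/Z$ disconnected, you have indicated no way to modify $Z$ into a flacet while preserving dependence. More seriously, the inequality $r_j(F)<|F|$ is \emph{not} forced by connectivity of $M_j|F$ and $M_j/F$: flacets with $|F|=1$ exist (the singletons of $U_{2,4}$, for instance), they have connected restriction and contraction, and they are independent; non-uniformity of $M_j$ says nothing about any particular flacet. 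What you actually need is that a connected non-uniform matroid possesses a \emph{dependent} flacet. A correct route: since $M_j\neq U_{r_j,n_j}$, some point $p$ of $\Delta(r_j,n_j)$ lies outside $P(M_j)$ and hence violates a facet-defining flat inequality $\sum_{i\in F}x_i\leq r_j(F)$ from Edmonds' description; because $p\in[0,1]^{n_j}$ gives $\sum_{i\in F}p_i\leq |F|$, this forces $r_j(F)<|F|$, and one then checks that the minimal flat defining that facet is still dependent. Until something of this kind is supplied, the equivalence between ``non-uniform'' and ``admits a split flacet'' --- and with it the proposition --- is not established.
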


Note that a characterisation of connected split matroids now immediately leads to a characterisation of all split matroids, by use of Proposition \ref{disconnected}.
Note also that $S_{0}$ has two connected components that are non-uniform, and hence $S_{0}$ is not a split matroid.
It is also easy to check that $S_{0}$ is an excluded minor for the class of split matroids. We now show that it is the only disconnected excluded minor. The following result is a consequence of \cite[Theorem 4.1]{gershkoffoxley}.

\begin{proposition}
\label{wheelminor}
Every connected non-uniform matroid has an $M(\mathcal{W}_{2})$\dash minor.
\end{proposition}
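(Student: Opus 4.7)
My plan is to prove Proposition~\ref{wheelminor} by induction on $|E(M)|$, using elementary matroid tools; the statement is also explicitly advertised as a consequence of the structural result \cite[Theorem 4.1]{gershkoffoxley}, which would serve as the quick route.

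For the base case, $|E(M)|\leq 4$: every connected matroid on at most three elements is uniform ($U_{1,1}$, $U_{1,2}$, $U_{1,3}$, $U_{2,3}$), so the hypothesis of the proposition is vacuous there. On four elements a short enumeration shows that the only connected non-uniform matroid is $M(\mathcal{W}_{2})$ itself. For the inductive step with $|E(M)|\geq 5$, I would invoke Tutte's theorem that for every element $e$ of a connected matroid with at least two elements, at least one of $M\ba e$ or $M/e$ is connected. If for some $e\in E(M)$ that connected single-element minor is again non-uniform, then the inductive hypothesis produces an $M(\mathcal{W}_{2})$-minor of the minor, hence of $M$, and the step is complete.

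The main obstacle is the remaining ``minimally non-uniform'' case, in which every connected single-element deletion and contraction of $M$ is uniform. I would fix an element $e$ with $M\ba e$ connected; then $M\ba e\cong U_{r(M),n-1}$, and $M$ arises as a non-generic single-element extension of $U_{r(M),n-1}$ (non-generic because $M$ itself is non-uniform). Such an extension places $e$ either parallel to some element, or into a flat of rank at most $r(M)-2$; after contracting an appropriately chosen independent set of size $r(M)-2$ to bring the rank down to $2$, and then deleting surplus elements, the restriction to a suitable four-element subset containing the resulting parallel pair becomes $M(\mathcal{W}_{2})$. The symmetric case, in which every $M/e$ with $M/e$ connected is uniform, is handled by the same argument applied to $M^{*}$, using that $M(\mathcal{W}_{2})$ is self-dual.
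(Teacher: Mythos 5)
The paper does not actually prove this proposition: it records it as a consequence of an external structural theorem (\cite[Theorem 4.1]{gershkoffoxley}), so your self-contained induction is a genuinely different route. The skeleton is sound: the base-case enumeration is correct ($M(\mathcal{W}_{2})$ is the unique connected non-uniform matroid on at most four elements), Tutte's theorem that $M\ba e$ or $M/e$ is connected drives the induction, and the self-duality of $M(\mathcal{W}_{2})$ legitimately reduces the remaining case to the situation where some $M\ba e$ is connected and uniform. What your argument buys is independence from the reference; what the citation buys is brevity.

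However, the key step needs repair before this is a proof. The dichotomy ``$e$ is parallel to some element, or lies in a flat of rank at most $r(M)-2$'' is not an accurate description of the non-uniform single-element extensions of $U_{r,n-1}$: the new element can lie freely on a hyperplane (rank $r-1$), and extensions are governed by modular cuts that need not be principal, so $e$ may lie on several incomparable small flats at once. As written, a case is missing. Fortunately, all you need is weaker and cleaner: every circuit of $M$ avoiding $e$ has size $r+1$, so non-uniformity forces a circuit $C\ni e$ with $|C|\le r$. A second point of care: when you contract an independent $(r-2)$-set $X$ to reach rank two, you must ensure that $e$ does not become a loop, i.e.\ that $X\cup e$ is independent; an arbitrary $X$ containing $C-\{e,c\}$ can fail this if $e$ lies on other small circuits. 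The standard fix is to extend $C-e$ to a basis $B$ of $M$ contained in $E(M)-e$ (possible since $M\ba e$ is uniform); then $C$ is the unique circuit in $B\cup e$, and setting $X=B-\{c,g\}$ for some $c\in C-e$ and $g\in B-(C-e)$ makes $X\cup e$ independent and $\{e,c\}$ a parallel pair of $M/X$. Any $h\in E(M)-B-e$ (nonempty because connectivity of $U_{r,n-1}$ forces $n\ge r+2$) then gives $(M/X)|\{e,c,g,h\}\cong M(\mathcal{W}_{2})$, since every relevant subset of $E(M)-e$ of size at most $r$ is independent. With these repairs your argument goes through.
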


\begin{proposition}
\label{disconnectedcase}
The only disconnected excluded minor for the class of split matroids is $S_{0}$.
\end{proposition}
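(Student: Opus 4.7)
The plan is to follow Proposition \ref{disconnected} closely. Suppose $M$ is a disconnected excluded minor with connected components $M|U_1,\ldots,M|U_t$, where $t\geq 2$. Each $M|U_i$ is obtained from $M$ by deleting the non-empty set $E(M)-U_i$, hence is a proper minor of $M$; by the excluded-minor hypothesis, every $M|U_i$ is split. Proposition \ref{disconnected} then forces at least two of the components, say $M|U_1$ and $M|U_2$, to be non-uniform — otherwise $M$ itself would be split.

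Next I would invoke Proposition \ref{wheelminor}: since $M|U_1$ and $M|U_2$ are connected and non-uniform, each contains $M(\mathcal{W}_2)$ as a minor. Because these minors live on disjoint ground sets, taking their direct sum inside $M$ shows that $M$ has $M(\mathcal{W}_2)\oplus M(\mathcal{W}_2)$ as a minor. By the remarks preceding Figure~\ref{exclminors}, this matroid is precisely $S_0$.

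To conclude, I would recall that $S_0$ has already been verified to be an excluded minor for the class of split matroids (as noted at the start of this section). Thus $S_0$ is not split, and so if $S_0$ were a proper minor of $M$, the excluded-minor property of $M$ would be violated. It follows that $M=S_0$, completing the proof.

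I do not expect any substantial obstacle here: the only subtlety is making sure that the $M(\mathcal{W}_2)$-minors picked out in the two components are genuinely disjoint, which is automatic because components of a matroid have disjoint ground sets, so their minors can be combined freely by a sequence of deletions and contractions on disjoint element sets.
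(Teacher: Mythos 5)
Your argument is correct and follows essentially the same route as the paper: each component is a proper minor and hence split, Proposition \ref{disconnected} forces two non-uniform components, Proposition \ref{wheelminor} yields an $M(\mathcal{W}_{2})\oplus M(\mathcal{W}_{2})\cong S_{0}$ minor, and the excluded-minor property forces $M\cong S_{0}$. The only difference is that you skip the paper's intermediate deduction that $M$ has exactly two components, which is indeed redundant given the final step.
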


\begin{proof}
Suppose $M$ is a disconnected excluded minor, so $M$ is not a split matroid, but every proper minor of $M$ is.
Let the connected components of $M$ be $U_{1},\ldots, U_{t}$, where $t>1$.
As each $M|U_{i}$ is a proper minor of $M$, we see that $M|U_{i}$ is a split matroid for each $i$.
If at most one component of $M$ is non-uniform, then $M$ is split, which is a contradiction.
So let $M|U_i$ and $M|U_j$ be non-uniform, where $1\leq i < j \leq t$.
Now $M|(U_{i}\cup U_{j})$ has two components, $U_{i}$ and $U_{j}$.
Both $M|U_{i}$ and $M|U_{j}$ are split but non-uniform, so $M|(U_{i}\cup U_{j})$ is not split.
Therefore it cannot be a proper minor of $M$.
From this we deduce that $i=1$ and $j=t=2$.
By Proposition \ref{wheelminor}, each of the two components of $M$ contains $M(\mathcal{W}_{2})$ as a minor.
Hence $M$ contains a minor isomorphic to $S_{0}\cong M(\mathcal{W}_{2})\oplus M(\mathcal{W}_{2})$.
As $S_{0}$ is an excluded minor, and no excluded minor can properly contain another, we now see that $M$ is isomorphic to $S_{0}$, as desired.
\end{proof}

\section{Proof of the main theorem}

\begin{lemma}
\label{S2S3S4}
Let $M$ be a connected matroid.
If $M$ has a proper cyclic flat, $Z$, such that $M|Z$ is connected and has an $M(\mathcal{W}_{2})$\dash minor, then $M$ has a minor isomorphic to $S_{2}$, $S_{3}$, or $S_{4}$.
\end{lemma}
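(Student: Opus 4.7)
The plan is to reduce $M$ to a six-element rank-three minor in which a four-element cyclic flat is isomorphic to $M(\mathcal{W}_2)$, and then identify this minor with one of $S_2$, $S_3$, or $S_4$ via case analysis.

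Using the hypothesis, I choose disjoint $X, Y \subseteq Z$ with $(M|Z)/X \setminus Y \cong M(\mathcal{W}_2)$, set $W = Z - X - Y$, and consider the intermediate minor $M_1 = M/X \setminus Y$. Since $Z$ is a flat of $M$ and all operations lie inside $Z$, a straightforward check shows that $W$ is a proper cyclic flat of $M_1$ with $M_1|W \cong M(\mathcal{W}_2)$; properness uses $r(M_1) > r(W) = 2$, which holds because $r(M) > r(Z)$ and $|X| = r(Z) - 2$.

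Set $s = r(M_1) - 2 = r(M) - r(Z) \geq 1$. I claim that $|E(M) - Z| \geq s + 1$: since $r(M/Z) = s$ and $M/Z$ has no loops, $|E(M) - Z| \geq s$; and if equality held, then $E(M) - Z$ would be a basis of $M/Z$, while the dual-rank computation $r_{M^*}(E(M) - Z) = |E(M) - Z| + r(Z) - r(M) = 0$ would make every element of $E(M) - Z$ a loop of $M^*$, hence a coloop of $M$, contradicting connectedness. Consequently I can choose an independent set $A \subseteq E(M_1) - W$ of size $s - 1$ that is independent in $M_1/W$, contract $A$, then delete the elements of $\cl_{M_1}(W \cup A) - W - A$ (to ensure that $W$ remains a flat after the contraction), and delete enough further elements of $E(M_1) - W - A$ to leave exactly two elements $e, f$ outside $W$. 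The result is a minor $N$ of $M$ on six elements of rank three, with $W$ a rank-two cyclic flat, $N|W \cong M(\mathcal{W}_2)$, and $N/W$ of rank one.

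Because $r(N/W) = 1$, the pair $\{e, f\}$ lies in a common non-trivial parallel class of $N/W$, so the dependencies between $\{e, f\}$ and $W$ are tightly constrained. The final step is a case analysis on which subsets of $W \cup \{e, f\}$ are circuits of $N$: whether $\{e, f\}$ itself is a circuit, whether some $\{e, f, w\}$ with $w \in W$ is a three-element circuit, and whether some $\{e, f, w, w'\}$ with $w, w' \in W$ is a four-element circuit. In each case the isomorphism type of $N$ is to be identified with one of the three target matroids. The main obstacle is this classification, together with the need to arrange the preceding choices of $A$, the deletions, and the pair $\{e, f\}$ so that $N$ is connected --- since the disconnected matroid $M(\mathcal{W}_2) \oplus U_{1,2}$, for instance, is split by Proposition~\ref{disconnected} and must be avoided. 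The connectedness of $M$ guarantees a circuit bridging $Z$ and $E(M) - Z$, and preserving a vestige of such a circuit through the reductions supplies the flexibility needed to make $N$ connected and to identify it with $S_2$, $S_3$, or $S_4$.
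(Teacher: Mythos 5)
Your overall plan---manufacture a six-element rank-three minor $N$ in which $W$ is a four-element rank-two cyclic flat with $N|W\cong M(\mathcal{W}_{2})$, and then classify such $N$---is a genuinely different route from the paper, which instead takes a minimal counterexample, proves that the complement of $Z$ must be a series pair, dualises to a rank-three matroid, and does the case analysis there. Your deferred endgame classification is in fact correct: since $W$ is a flat, the line $\cl_{N}(\{e,f\})$ meets $W$ in a flat of rank at most one, and the three possibilities (empty intersection, the parallel pair of $W$, a simple point of $W$) give exactly $S_{4}$, $S_{2}$ and $S_{3}$, while the case where $\{e,f\}$ is a circuit gives the disconnected matroid $M(\mathcal{W}_{2})\oplus U_{1,2}$. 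So if the construction of a \emph{connected} $N$ were secured, the proof would close without any appeal to duality, which is a genuine simplification of the paper's final step.

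However, the construction is where the proof actually has to work, and two points are left unestablished. First, after contracting $A$ you delete $\cl_{M_{1}}(W\cup A)-W-A$ and claim two elements survive outside $W$; but $\cl_{M_{1}}(W\cup A)$ is a hyperplane of $M_{1}$, and its complement may consist of a single element (a coloop of $M_{1}$, which the minor $M/X\backslash Y$ of a connected matroid can certainly possess), in which case your count $|E(M)-Z|\geq s+1$ does not save you. Second, and more seriously, you must choose $e$ and $f$ so that they are not parallel in the final minor: as you yourself observe, if $\{e,f\}$ is a circuit then $N\cong M(\mathcal{W}_{2})\oplus U_{1,2}$, which is split and proves nothing. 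You address this only with the remark that a circuit of $M$ bridging $Z$ and $E(M)-Z$ should leave a ``vestige'' after contracting $X\cup A$ and performing the deletions; but contractions inside $Z$ can collapse such a circuit onto a parallel pair (e.g.\ when $E(M)-Z$ is a series pair of $M$ and the bridging circuit meets $Z$ in a set spanned by $X$), and nothing in the argument shows that \emph{some} admissible choice of $X$, $A$, and the surviving pair always avoids this. This is precisely the difficulty that the paper's three subpropositions and its passage to the rank-three dual are designed to overcome, so as it stands the proposal is a correct strategy with its central step missing rather than a proof.
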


\begin{proof}
Let $M$ be a counterexample chosen so that its ground set is as small as possible.
We let $Z$ be a proper cyclic flat of $M$ such that $M|Z$ is connected with an $M(\mathcal{W}_{2})$\dash minor.
Amongst all such flats, we assume that we have chosen $Z$ to be as small as possible.
Since $M$ is a counterexample, it has no minor isomorphic to $S_{2}$, $S_{3}$, or $S_{4}$.

\begin{subproposition}
\label{subprop2}
If $e$ is any element of $Z$, then $(M|Z)\ba e$ has no $M(\mathcal{W}_{2})$\dash minor.
\end{subproposition}

\begin{proof}
We assume otherwise.
It is well-known and easy to verify that $Z-e$ is a flat of $M\ba e$.
As $Z$ contains an $M(\mathcal{W}_{2})$\dash minor, we see that $|Z| \geq 4$.
First we consider the case that $(M|Z)\ba e = M|(Z-e)$ is connected.
Since $M|(Z-e)$ is a connected, non-empty matroid, it contains no coloops.
This shows that $Z-e$ is a cyclic flat of $M\ba e$.
Since $M|(Z-e)$ has an $M(\mathcal{W}_{2})$\dash minor, it has rank greater than zero.
As $e$ is not a coloop of $M$, or of $M|Z$, we also have $r_{M\ba e}(Z-e)=r_{M}(Z)<r(M)=r(M\ba e)$.
This establishes that $Z-e$ is a proper cyclic flat of $M\ba e$.
Assume that $M\ba e$ is not connected, and let $(U,V)$ be a separation.
Since $M|(Z-e)$ is connected, we can assume that $Z-e$ is a subset of $U$.
As $Z$ is a cyclic flat, $e$ is spanned by $Z-e$ in $M$.
From this it follows that $(U\cup e,V)$ is a separation of $M$, which is impossible.
Therefore $M\ba e$ is a connected matroid, and $Z-e$ is a proper cyclic flat of $M\ba e$ such that $(M\ba e)|(Z-e) = (M|Z)\ba e$ is connected and has an $M(\mathcal{W}_{2})$\dash minor.
We have shown that $M\ba e$ is a smaller counterexample to the lemma, and from this contradiction
we deduce that $(M|Z)\ba e$ is not connected.

Let $(U_{1},\ldots, U_{t})$ be the partition of $Z-e$ into connected components of $(M|Z)\ba e$, where $t>1$.
Thus $(M|Z)\ba e = (M|U_{1})\oplus\cdots\oplus (M|U_{t})$.
Since $M(\mathcal{W}_{2})$ is a connected matroid, we can assume that $M|U_{1}$ has an $M(\mathcal{W}_{2})$\dash minor \cite[Proposition 4.2.20]{oxley}.
As $U_{1}$ is a connected component of $(M|Z)\ba e$ with at least four elements there are no coloops in $M|U_{1}$.
It follows that $U_{1}$ is a cyclic flat of $(M|Z)\ba e$.
Assume that $U_{1}$ is not a flat of $M$, and let $z$ be an element in $\cl_{M}(U_{1})-U_{1}$.
Note that $\cl_{M}(U_{1})\subseteq \cl_{M}(Z)=Z$, so $z$ is in $Z$.
If $z=e$, then $(U_{1}\cup e,U_{2}\cup\cdots\cup U_{t})$ is a separation of the connected matroid $M|Z$, so $z\ne e$.
Let $C$ be a circuit containing $z$ such that $C\subseteq U_{1}\cup z$.
Then $C$ contains elements from both $U_{1}$ and $U_{2}\cup\cdots \cup U_{t}$, and as $(U_{1},U_{2}\cup\cdots\cup U_{t})$ is a separation of $(M|Z)\ba e$, we have a contradiction.
Therefore $U_{1}$ is a cyclic flat of $M$.
Now $r_{M}(U_{1})\leq r_{M}(Z)<r(M)$, and obviously $r_{M}(U_{1})>0$, so $U_{1}$ is a proper cyclic flat of $M$.
Moreover $M|U_{1}$ is connected and has an $M(\mathcal{W}_{2})$\dash minor.
But we chose $Z$ to be the smallest possible cyclic flat with these properties, and $U_{1}$ does not contain any element of $U_{2}\cup\cdots\cup U_{t}$ so it is strictly smaller than $Z$.
This contradiction completes the proof.
\end{proof}

\begin{subproposition}
\label{subprop1}
If $x$ is an element in the complement of $Z$, then $M\ba x$ is not connected.
\end{subproposition}

\begin{proof}
Assume otherwise.
Note that $r_{M\ba x}(Z)=r_{M}(Z)<r(M)=r(M\ba x)$, so it is obvious that $Z$ is a proper cyclic flat of $M\ba x$.
Moreover $(M\ba x)|Z = M|Z$ is connected and has an $M(\mathcal{W}_{2})$\dash minor.
This contradicts the minimality of $M$, so $M\ba x$ is not connected.
\end{proof}

\begin{subproposition}
The complement of $Z$ is a series pair of $M$.
\end{subproposition}

\begin{proof}
Choose an arbitrary element, $x$, in the complement of $Z$.
Using \ref{subprop1}, we let $(U_{1},\ldots, U_{t})$ be the partition of $E(M)-x$ into connected components of $M\ba x$, where $t>1$.
As $M|Z$ is connected, we can assume that $Z\subseteq U_{1}$.
Then $Z$ is a cyclic flat of $M|U_{1}$.
If it is a proper cyclic flat of $M|U_{1}$, then $M|U_{1}$ is a connected matroid with a proper cyclic flat such that the restriction to this cyclic flat is connected with  an $M(\mathcal{W}_{2})$\dash minor.
This contradicts the minimality of $M$, so $Z$ spans $U_{1}$.
It is straightforward to verify that $U_{1}$ is a flat of $M$, using some of the same arguments as in \ref{subprop2}.
Hence $Z=U_{1}$.

Let $y$ be an element of $U_{2}$.
Again using \ref{subprop1}, we see that $M\ba y$ is not connected.
Therefore $M/y$ is connected \cite[Theorem 4.3.1]{oxley}.
We can easily check that $\cl_{M/y}(Z)$ is a cyclic flat of $M/y$, and that $(M/y)|(\cl_{M/y}(Z))$ is connected with an $M(\mathcal{W}_{2})$\dash minor.
So if $\cl_{M/y}(Z)$ is a proper cyclic flat of $M/y$, we have contradicted the minimality of $M$.
Therefore $Z$ is not a proper cyclic flat of $M/y$, meaning that $r(Z)=r(M)-1$.
Hence $Z$ is a hyperplane of $M$, and its complement is a cocircuit.
However,
\[r(M)=r(M\ba x)=r(U_{1})+\cdots +r(U_{t}) = r(Z)+r(U_{2})+\cdots + r(U_{t}).\]
From this, and the fact that $M$ has no loops, we deduce that $t=2$, and that $r(U_{2})=1$.
Assume that $|U_{2}|>1$, and let $z$ be an element in $U_{2}-y$.
Then $\{y,z\}$ is a parallel pair.
But deleting an element from a parallel pair in a connected matroid always produces another connected matroid, so we are led to a violation of \ref{subprop1}.
Thus $U_{2}=\{y\}$, and we conclude that the complement of $Z$ is the series pair $\{x,y\}$.
\end{proof}

Let $\{x,y\}$ be the complement of $Z$, so that $\{x,y\}$ is a series pair.
Since $M|Z$ has an $M(\mathcal{W}_{2})$\dash minor, but \ref{subprop2} implies we cannot produce such a minor by deleting any element, we see that there is a subset $I\subseteq Z$ such that $(M|Z)/I$ is isomorphic to $M(\mathcal{W}_{2})$.
Assume $I$ is not independent, and let $e$ be an element contained in a circuit of $M|I$.
Then $(M|Z)/I=(M|Z)/(I-e)\ba e$, so we have a contradiction to \ref{subprop2}.
Therefore $I$ is an independent set.
Dualising, we see that $(M|Z)^{*} = (M\ba\{x,y\})^{*}=M^{*}/\{x,y\}$ has a coindependent set, $I$, such that $M^{*}/\{x,y\}\ba I$ is isomorphic to $M(\mathcal{W}_{2})$ (as $M(\mathcal{W}_{2})$ is self-dual).
Note that $\{x,y\}$ is a parallel pair in $M^{*}$.
As $I$ is coindependent, $r(M^{*}/\{x,y\})=r(M^{*}/\{x,y\}\backslash I)=r(M(\mathcal{W}_{2}))=2$, so $r(M^{*})=3$.

We choose elements $a$, $b$, $c$, and $d$, so that $(M^{*}/\{x,y\})|\{a,b,c,d\}$ is isomorphic to $M(\mathcal{W}_{2})$, where $\{a,b\}$ is a parallel pair in $M^{*}/\{x,y\}$.
Note that $\{a,b,x\}$ has rank two in $M^{*}$, that $\{c,d,x\}$ is independent, and that neither $c$ nor $d$ is on the line spanned by $\{a,b,x\}$.
We divide into two cases, according to whether or not $\{a,b\}$ is a parallel pair in $M^{*}$.

First assume that $\{a,b\}$ is not a parallel pair, so that it is independent in $M^{*}$.
Note that $M^{*}|\{a,b,x,y\}$ is isomorphic to $M(\mathcal{W}_{2})$.
The lines $\cl_{M}^{*}(\{c,d\})$ and $\cl_{M}^{*}(\{a,b,x,y\})$ intersect in a flat of rank at most one, and this flat cannot contain $x$.
Hence the intersection of $\cl_{M}^{*}(\{c,d\})$ and $\{a,b,x,y\}$ is either empty, or it contains $a$ (up to symmetry between $a$ and $b$).
In the first case, the restriction $M^{*}|\{a,b,c,d,x,y\}$ is isomorphic to $S_{4}$,
and in the second it is isomorphic to $S_{3}$.
In these cases, $M$ also has a minor isomorphic to $S_{3}$ or $S_{4}$.
Since this is a contradiction, we assume that $\{a,b\}$ is a parallel pair of $M^{*}$.

If $\{a,c,d\}$ is independent, then $M^{*}|\{a,b,c,d,x,y\}$ is isomorphic to $S_{1}$, which implies that $M$ has a minor isomorphic to $S_{1}^{*}\cong S_{2}$.
This is a contradiction, so $\{a,c,d\}$ has rank two.
Note that the restriction to $\{a,b,c,d\}$ is isomorphic to $M(\mathcal{W}_{2})$.
As $M^{*}$ is a connected rank\dash $3$ matroid, the complement of the line $\cl_{M}^{*}(\{a,b,c,d\})$ has rank at least two.
We let $z$ be an element in this complement, chosen so that $\{x,z\}$ is independent.
The intersection of $\cl_{M}^{*}(\{x,y,z\})$ and $\{a,b,c,d\}$ is either $\emptyset$, $\{a,b\}$, or $\{c\}$ (up to symmetry between $c$ and $d$).
In the first case, $M^{*}|\{a,b,c,d,x,z\}$ is isomorphic to $S_{4}$.
In the second and third cases, $M^{*}|\{a,c,d,x,y,z\}$ is isomorphic to $S_{3}$.
Thus we have a contradiction in any case, and this completes the proof of the lemma.
\end{proof}

\begin{proposition}
\label{pcf}
Let $Z$ be a proper cyclic flat of the matroid $M$. If $E(M)-Z$ is not a proper cyclic flat of $M^*$, then every element in $E(M)-Z$ is a coloop of $M$.
\end{proposition}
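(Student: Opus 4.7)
The plan is to leverage the standard complementation duality between cyclic flats: a subset $F\subseteq E(M)$ is a flat of $M$ if and only if $E(M)-F$ is a union of cocircuits of $M$, i.e.\ is cyclic in $M^{*}$; and dually, $F$ is cyclic in $M$ if and only if $E(M)-F$ is a flat of $M^{*}$. Consequently, whenever $Z$ is a cyclic flat of $M$, the complement $E(M)-Z$ is automatically a cyclic flat of $M^{*}$. So under the hypothesis of the proposition, the only way $E(M)-Z$ can fail to be \emph{proper} in $M^{*}$ is for its rank $r_{M^{*}}(E(M)-Z)$ to equal either $0$ or $r(M^{*})$.

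The core of the proof is then to rule out the case $r_{M^{*}}(E(M)-Z)=r(M^{*})$. I would use the standard identity $r_{M^{*}}(A)=|A|+r_{M}(E(M)-A)-r(M)$: applying it with $A=E(M)-Z$ and equating with $r(M^{*})=|E(M)|-r(M)$ forces $r_{M}(Z)=|Z|$, so $Z$ would have to be independent. But $Z$ is a proper flat, so $r(Z)>0$ and in particular $Z$ is non-empty; being cyclic, it must then contain a circuit, contradicting independence. Thus $r_{M^{*}}(E(M)-Z)=0$, which is precisely the statement that every element of $E(M)-Z$ is a loop of $M^{*}$, equivalently a coloop of $M$.

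There is no real obstacle here: the argument is essentially a two\dash line duality calculation. The only step that deserves careful justification is the complementation duality for cyclic flats invoked at the start, which is folklore but worth citing (e.g.\ via \cite{oxley}) so that the proof is self\dash contained.
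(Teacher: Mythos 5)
Your proposal is correct and follows essentially the same route as the paper's proof: both reduce to the well-known fact that $E(M)-Z$ is a cyclic flat of $M^{*}$, use the corank identity to rule out $r^{*}(E(M)-Z)=r(M^{*})$ by forcing $Z$ to be independent (contradicting that a proper, hence non-empty, cyclic flat contains a circuit), and conclude that $r^{*}(E(M)-Z)=0$, i.e.\ every element of $E(M)-Z$ is a coloop of $M$. No substantive differences.
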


\begin{proof}
Let $E$ be the ground set of $M$.
The fact that $E-Z$ is a cyclic flat of $M^*$ is well-known and easy to verify.
Suppose it is not proper; that is, $r^*(E-Z)=r(M^*)$ or $r^*(E-Z)=0$.
First, consider the case where $r^*(E-Z)=r(M^*)=|E|-r(M)$.
Then the corank function gives \[|E|-r(M)=r(Z)+|E-Z|-r(M).\]
This implies that $r(Z)=|Z|$, so $Z$ is an independent set in $M$.
The only independent cyclic flat is the empty set, and $Z$ is non-empty since it is a proper flat of $M$.
So if $E-Z$ is not a proper flat, then $r^{*}(E-Z)=0$, and this implies that every element in $E-Z$ is a coloop of $M$.
\end{proof}

\begin{proposition}
\label{dualitycertificate}
Let $M$ be a connected matroid that is not split.
There exists $M'\in\{M,M^{*}\}$ such that the following holds: $M'$ has a proper cyclic flat, $Z$, where $M'|Z$ is connected and non-uniform.
\end{proposition}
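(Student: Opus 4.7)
The plan is to invoke Lemma \ref{certificates} to extract a certificate for non-splitting in $M$, and then either use $M$ itself or pass to the dual $M^{*}$ via Proposition \ref{pcf}. Since $M$ is connected and not split, Lemma \ref{certificates} produces a proper cyclic flat $Z$ of $M$ such that both $M|Z$ and $M/Z$ are connected, with at least one of these two restrictions being non-uniform.

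If $M|Z$ is non-uniform, setting $M'=M$ finishes the argument immediately, because $Z$ itself already witnesses the desired conclusion. So the substantive case is when $M|Z$ is uniform, which forces $M/Z$ to be non-uniform. Here the natural candidate for the witness in the dual is $Z' := E(M)-Z$. It is standard that the complement of a cyclic flat of $M$ is a cyclic flat of $M^{*}$, and by the basic duality identity $M^{*}|Z' = (M/Z)^{*}$. Because duality preserves both connectedness and uniformity (note $U_{k,n}^{*}=U_{n-k,n}$), the fact that $M/Z$ is connected and non-uniform immediately transfers to $M^{*}|Z'$.

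The only remaining task is to verify that $Z'$ is \emph{proper} in $M^{*}$, i.e.\ $0 < r^{*}(Z') < r(M^{*})$. For this I would directly apply Proposition \ref{pcf}: if $Z'$ were not a proper cyclic flat of $M^{*}$, then every element of $Z'$ would be a coloop of $M$. But the existence of the proper cyclic flat $Z$ forces $r(M)\geq r(Z)+1\geq 2$, so $|E(M)|\geq 2$, and a connected matroid on at least two elements has no coloops. This contradiction establishes that $Z'$ is proper, so taking $M'=M^{*}$ completes the proof.

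The proof is essentially a short piece of bookkeeping that combines Lemma \ref{certificates} with the cyclic-flat/complement duality, and the main point where attention is required is the final appeal to Proposition \ref{pcf}, since one must observe that the coloop alternative is excluded precisely by the connectedness hypothesis on $M$. Everything else is a routine transfer of properties through duality.
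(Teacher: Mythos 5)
Your proposal is correct and follows essentially the same route as the paper's proof: extract a certificate via Lemma~\ref{certificates}, handle the case $M|Z$ non-uniform directly, and otherwise pass to $E(M)-Z$ in $M^{*}$ using Proposition~\ref{pcf} after ruling out coloops. The only cosmetic difference is how coloops are excluded (the paper notes a connected matroid with a coloop is $U_{1,1}$, hence split; you derive $|E(M)|\geq 2$ from the existence of the proper cyclic flat), and both arguments are fine.
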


\begin{proof}
Let $E$ be the ground set of $M$.
As $M$ is connected and not split, it contains a certificate, $Z$, for non-splitting, by Lemma \ref{certificates}.
Thus $Z$ is a proper cyclic flat such that both $M|Z$ and $M/Z$ are connected matroids and either $M|Z$ or $M/Z$ is non-uniform.
If $M|Z$ is non-uniform, then we set $M'$ to be $M$ and we are done.
So we assume that $M/Z$ is non-uniform.
If $M$ contains a coloop, then it is isomorphic to the uniform matroid $U_{1,1}$, and is therefore a split matroid.
This is impossible, so $M$ has no coloops.
We apply Proposition \ref{pcf} and deduce that $E-Z$ is a proper cyclic flat of $M^{*}$.
Note that $M^{*}|(E-Z)=(M/Z)^{*}$ and $M^{*}/(E-Z)=(M|Z)^{*}$.
Both of these matroids are connected, and $M^{*}|(E-Z)=(M/Z)^{*}$ is non-uniform.
Therefore we set $M'$ to be $M^{*}$ and relabel $E-Z$ as $Z$.
\end{proof}

As an aside, this gives us an alternative characterisation of connected split matroids.

\begin{corollary}
\label{maincorollary}
Let $M$ be a connected matroid.
Then $M$ is split if and only if, for every $M'\in\{M,M^{*}\}$ and every proper cyclic flat $Z$ of $M'$, when $M'|Z$ is connected it is uniform.
\end{corollary}

\begin{proof}
Proposition \ref{dualitycertificate} provides us with the ``if" direction.
For the ``only if" direction, we assume there exists $M' \in \{M,M^{*}\}$ and a proper cyclic flat $Z$ of $M'$ such that $M'|Z$ is connected and non-uniform.
By Proposition \ref{wheelminor}, $M'|Z$ has an $M(\mathcal{W}_{2})$\dash minor.
Lemma \ref{S2S3S4} tells us that $M'$ has a minor isomorphic to $S_{2}$, $S_{3}$, or $S_{4}$, so $M$ has a minor isomorphic to $S_{1}$, $S_{2}$, $S_{3}$, or $S_{4}$.
This implies $M$ is not split.
\end{proof}

We can now easily prove our main result.

\begin{proof}[Proof of \textup{Theorem \ref{maintheorem}}.]
The connected matroids $S_{1}$, $S_{2}$, $S_{3}$, and $S_{4}$ all contain certificates for non-splitting.
It is routine to verify that they are excluded minors.
Let $M$ be an excluded minor for the class of split matroids.
If $M$ is not connected, then it is isomorphic to $S_{0}$ by Proposition \ref{disconnectedcase}.
Therefore we assume that $M$ is connected.
By using Proposition \ref{dualitycertificate} and duality, we can assume that $M$ has a proper cyclic flat, $Z$, such that $M|Z$ is connected and non-uniform.
Proposition  \ref{wheelminor} implies that $M|Z$ has an $M(\mathcal{W}_{2})$\dash minor.
Lemma \ref{S2S3S4}, and the fact that no excluded minor properly contains another, implies that $M$ is isomorphic to $S_{2}$, $S_{3}$, or $S_{4}$.
(Note that $S_{1}$ does not appear in this analysis because of our duality assumption.)
\end{proof}

\section{Acknowledgments}

We thank James Oxley and Michael Joswig for their helpful advice, and the referees for useful comments. Dillon Mayhew was supported by a Rutherford Discovery Fellowship.


\end{document}